\newtheorem{theorem}{Theorem}
\newtheorem{lemma}[theorem]{Lemma}
\newtheorem{definition}[theorem]{Definition}
\newtheorem{remark}[theorem]{Remark}
\newtheorem{result}[theorem]{Result}
\newtheorem{corollary}[theorem]{Corollary}
\def\PG{\mathrm{PG}}
\def\C{\mathrm{C}_k}
\def\F{\mathbb{F}_q}
\def\V{\mathrm{V}}
\title{An empty interval in the spectrum of small weight codewords in the code from points and $k$-spaces of $\PG(n,q)$}
\author{M. Lavrauw \thanks{This author's research was supported by the Fund for Scientific Research Ð Flanders (FWO - Vlaanderen).} \and L. Storme \and P. Sziklai\thanks{This author was partially supported by OTKA T-049662, T-067867 and Bolyai
grants.} \and G. Van de Voorde \thanks{This author's research was supported by the Institute for the Promotion of Innovation through Science
and Technology in Flanders (IWT-Vlaanderen) and the Fund for Scientific Research Ð Flanders (FWO - Vlaanderen).} }
\begin{document}

\maketitle
\begin{abstract} Let  $\C(n,q)$ be the $p$-ary linear code defined by the incidence matrix of points and $k$-spaces in $\PG(n,q)$, $q=p^h$, $p$ prime, $h\geq 1$. In this paper, we show that there are no codewords of weight in the open interval $]\frac{q^{k+1}-1}{q-1},2q^k[$ in $\C(n,q)\setminus\mathrm{C}_{n-k}(n,q)^\bot$ which implies that there are no codewords with this weight in $\C(n,q)\setminus \C(n,q)^{\bot}$ if $k\geq n/2$. In particular, for the code $\mathrm{C}_{n-1}(n,q)$ of points and hyperplanes of $\PG(n,q)$, we exclude all codewords in $\mathrm{C}_{n-1}(n,q)$ with weight in the open interval $]\frac{q^n-1}{q-1},2q^{n-1}[$. This latter result implies a sharp bound on the weight of small weight codewords of $\mathrm{C}_{n-1}(n,q)$, a result which was previously only known for general dimension for $q$ prime and $q=p^2$, with $p$ prime, $p>11$, and in the case $n=2$, for $q=p^3$, $p\geq 7$ (\cite{chouinard2},\cite{fack},\cite{LSV1},\cite{LSV2}).
\end{abstract}

\section{Definitions}
Let $\PG(n,q)$ denote the $n$-dimensional projective space
over the finite field  $\F$ with $q$ elements, where $q=p^h$,  $p$ prime, $h\geq 1$, and let $\V(n+1,q)$ denote the underlying vector space. Let $\theta_n$ denote the number of points in $\PG(n,q)$, i.e., $\theta_n=(q^{n+1}-1)/(q-1)$.

We define the incidence matrix~$A = (a_{ij})$ 
of points and $k$-spaces in the projective space $\PG(n,q)$, $q=p^h$,  $p$~prime, $h\geq 1$, 
as the matrix whose rows are 
indexed by the $k$-spaces of $\PG(n,q)$ and whose columns are indexed
by the points of $\PG(n,q)$, and 
with entry
$$ 
a_{ij} = \left\{
\begin{array}{ll}
1 & \textrm{if point $j$ belongs to $k$-space $i$,}\\
0 & \textrm{otherwise.}
\end{array} 
\right.
$$
The $p$-ary linear code of points and $k$-spaces of $\PG(n,q)$, $q=p^h$, $p$ prime, $h\geq 1$,  is 
the $\mathbb{F}_p$-span of the rows of the  incidence matrix $A$. We denote this code by $\C(n,q)$. The {\em support} of a codeword $c$, denoted by $supp(c)$, is the set of all non-zero positions of $c$. The {\em weight} of $c$ is the number of non-zero positions of $c$ and is denoted by $wt(c)$. Often we identify the support of a codeword with the corresponding set of points of $\PG(n,q)$. We let $(c_1,c_2)$ denote the scalar product in $\mathbb{F}_p$ of two codewords $c_1, c_2$ of $\C(n,q)$. Furthermore, if $T$ is a set of points of $\PG(n,q)$, then the incidence vector of this set is also denoted by $T$.
The dual code $\C(n,q)^\bot$ is the set of all vectors orthogonal to all codewords of $\C(n,q)$, hence
$$\C(n,q)^\bot=\{ v\in V(\theta_n,p) || (v,c)= 0,\ \forall c\in \C(n,q)\}.$$
It is easy to see that $c\in\C(n,q)^{\bot}$ if and only if $(c,K) = 0$ for all $k$-spaces $K$ of $\PG(n, q)$.

\section{Previous results}
The $p$-ary linear code of points and lines of $\PG(2,q)$, $q=p^h$, $p$ prime, $h\geq 1$, is studied in \cite[Chapter 6]{AK}. In \cite[Proposition 5.7.3]{AK}, the codewords of minimum weight of the code of points and hyperplanes of $\PG(n,q)$, $q=p^h$, $p$ prime, $h\geq 1$, are determined. The first results on codewords of small weight in the $p$-ary linear code of points and lines in $\PG(2,p)$, $p$ prime, were proved by McGuire and Ward \cite{McGuire}, where they proved that there are no codewords of $\mathrm{C}_1(2,p)$, $p$ an odd prime, in the interval $[p+2,3(p+1)/2]$. This result was extended by Chouinard (see \cite{chouinard}, \cite{chouinard2}) where he proves the following result.
\begin{result}\label{ch}\cite{chouinard},\cite{chouinard2} In the $p$-ary linear code arising from $\PG(2,p)$, $p$ prime, there are no codewords with weight in the closed interval $[p+2,2p-1]$.
\end{result}
This result shows that there is a gap in the weight enumerator of the code $\mathrm{C}_1(2,p)$ of points and lines in $\PG(2,p)$, $p$ prime. In Corollary \ref{lgev}, Result \ref{ch} is extended to the code of points and $k$-spaces in $\PG(n,p)$, $p$ prime, $p>5$. 

In the case where $q$ is not a prime, we improve on results of \cite{LSV1} and \cite{LSV2}, where the authors exclude codewords of small weight in $\mathrm{C}_{n-1}(n,q)$, $q=p^h$, $p$ prime, $h\geq 1$, respectively $\C(n,q)\setminus\C(n,q)^{\bot}$, $q=p^h$, $p$ prime, $h\geq 1$, corresponding to linear small minimal blocking sets, which implied Result \ref{Re1} and Result \ref{Re}. For the definition of a blocking set, see the next section.
\begin{result}\label{Re1}\cite[Corollary 3]{LSV1} The only possible codewords $c$ of $\mathrm{C}_{n-1}(n,q)$, $q=p^h$, $p$ prime, $h\geq 1$, of weight in the open interval
$]\theta_{n-1},2q^{n-1}[$ are the scalar multiples of non-linear minimal blocking sets, intersecting every line in $1\pmod{p}$ points.\end{result}
\begin{result}\cite[Corollary 2]{LSV2} \label{Re} For  $k\geq n/2$, the only possible codewords $c$ of $\C(n,q)\setminus\C(n,q)^{\bot}$, $q=p^h$, $p$ prime, $h\geq 1$, of weight in the open interval $]\theta_k,2q^k[$ are scalar multiples of non-linear minimal $k$-blocking sets of $\PG(n,q)$, intersecting every line in $1\pmod{p}$ or zero points.
\end{result}
\begin{remark} It is believed (and conjectured, see \cite[Conjecture 3.1]{sziklai}) that all small minimal blocking sets are linear. If that conjecture is true, then Result \ref{Re1} eliminates all possible codewords of $\mathrm{C}_{n-1}(n,q)$, $q=p^h$, $p$ prime, $h\geq 1$, of weight in the open interval $]\theta_{n-1},2q^{n-1}[$, and Result \ref{Re} eliminates all codewords of $\C(n,q)\setminus\C(n,q)^{\bot}$, $q=p^h$, $p$ prime, $h\geq 1$, of weight in the open interval $]\theta_k,2q^k[$ if $k\geq n/2$.
\end{remark}
In this article, we avoid the obstacle of this non-solved conjecture and improve on Result \ref{Re1} and Result \ref{Re} by showing that there are no codewords in $\C(n,q)\setminus\mathrm{C}_{n-k}(n,q)^{\bot}$, $q=p^h$, $p$ prime, $p>5$, $h\geq 1$, in the open interval $]\theta_k,2q^k[$, which implies that there are no codewords in the open interval $]\theta_k,2q^k[$ in $\C(n,q)\setminus\C(n,q)^{\bot}$ if $k\geq n/2$. Using the results of \cite{LSV2}, we show that there are no codewords in $\C(n,q)$, $q=p^h$, $p$ prime, $h\geq 1$, $p> 7$, with weight in the open interval $]\theta_k,(12\theta_k+6)/7[$.

 In the case that $k=n-1$, we show that there are no codewords in $\mathrm{C}_{n-1}(n,q)$, $q=p^h$, $p$ prime, $h\geq 1$, in the open interval $]\theta_{n-1},2q^{n-1}[$. 
These bounds are sharp: codewords of minimum weight in $\mathrm{C}_{n-1}(n,q)$ have been characterized as scalar multiples of incidence vectors of hyperplanes (see \cite[Proposition 5.7.3]{AK}), and codewords of weight $2q^{n-1}$ can be obtained by taking the difference of the incidence vectors of two hyperplanes.

\section{Blocking sets}\label{Blo}
A {\em blocking set} of $\PG(n,q)$ is a
set $K$ of points such that each hyperplane of $\PG(n,q)$ contains at least one point
of $K$. A blocking set $K$ is called {\em trivial}
 if it contains a line of $\PG(n,q)$. 
These blocking sets are also called 
{\em $1$-blocking sets} in \cite{AB:80}. In general, a \emph{$k$-blocking
set} $K$ in $\PG(n,q)$ is a set of points  such that any $(n-k)$-dimensional
subspace intersects $K$. A $k$-blocking set $K$ is called {\em trivial} if there is a $k$-dimensional subspace contained in 
$K$. 
If an $(n-k)$-dimensional space contains
exactly one point of a $k$-blocking set $K$ in $\PG(n,q)$, it is called a {\em tangent $(n-k)$-space} to $K$, and a point
$P$ of
$K$ is called {\em essential} when it belongs to a tangent $(n-k)$-space of $K$.
A $k$-blocking set $K$ is called {\em minimal} when no proper subset of $K$
is also a $k$-blocking set, i.e., when each point of $K$ is essential. A $k$-blocking set is called {\em small} if it contains less than $3(q^k+1)/2$ points.\\

 In order to define a {\em linear} $k$-blocking set, we introduce the notion of a Desarguesian spread.

By field reduction, the points of $\PG(n,q)$, $q=p^h$, $p$ prime, $h\geq 1$, correspond to $(h-1)$-dimensional subspaces of $\PG((n+1)h-1,p)$, since a point of $\PG(n,q)$ is a $1$-dimensional vector space over ${\mathbb F}_q$, and so an $h$-dimensional vector space over ${\mathbb F}_p$. In this way, we obtain a partition ${\mathcal D}$ of the point set of $\PG((n+1)h-1,p)$ by $(h-1)$-dimensional subspaces. In general, a partition of the point set of a projective space by subspaces of a given dimension $k$ is called a {\it spread}, or a {\it $k$-spread} if we want to specify the dimension. The spread we have obtained here is called a {\it Desarguesian spread}. Note that the Desarguesian spread satisfies the property that each subspace spanned by two spread elements is again partitioned by spread elements. 

\begin{definition} Let $\mathcal{D}$ be a Desarguesian $(h-1)$-spread of $\PG((n+1)h-1,p)$ as defined above. If $U$ is a subset of $\PG((n+1)h-1,p)$, then we write $\mathcal{B}(U)=\lbrace R \in \mathcal{D}||U\cap R \neq \emptyset \rbrace$.\end{definition}

In analogy with the correspondence between the points of $\PG(n,q)$ and the elements of a Desarguesian spread $\mathcal D$ in $\PG((n+1)h-1,p)$, we obtain the correspondence between the lines of $\PG(n,q)$ and the $(2h-1)$-dimensional subspaces of $\PG((n+1)h-1,p)$ spanned by two elements of $\mathcal D$, and in general, we obtain the correspondence between the $(n-k)$-spaces of $\PG(n,q)$ and the $((n-k+1)h-1)$-dimensional subspaces of $\PG((n+1)h-1,p)$ spanned by $n-k+1$ elements of $\mathcal D$. With this in mind, it is clear that any $hk$-dimensional subspace $U$ of $\PG(h(n+1)-1,p)$ defines a $k$-blocking set ${\mathcal B}(U)$  in $\PG(n,q)$. A blocking set constructed in this way is called a {\it linear $k$-blocking set}.  Linear $k$-blocking sets were first introduced by Lunardon \cite[Section 5]{L1}, although there a different approach is used.
For more on the approach explained here, we refer to \cite[Chapter 1]{lavrauw2001}. 

\section{Results}
In \cite{sz}, Sz\H{o}nyi and Weiner proved the following result on small blocking sets.
\begin{result}\label{sz}\cite[Theorem 2.7]{sz} Let $B$ be a minimal blocking set of $\PG(n,q)$ with respect to $k$-dimensional subspaces, $q=p^h$, $p>2$ prime, $h\geq 1$, and assume that $\vert B \vert < 3(q^{n-k}+1)/2$. Then any subspace that intersects $B$, intersects it in $1 \pmod{p}$ points.
\end{result}

In \cite{LSV2}, Lavrauw et al. proved the following lemmas.
\begin{result}\label{st}The support of a codeword $c\in\C(n,q)$, $q=p^h$, $p$ prime, $h\geq 1$, with weight smaller than $2q^k$, for which $(c,S)\neq 0$ for some $(n-k)$-space $S$, is a minimal $k$-blocking set in $\PG(n,q)$. Moreover, $c$ is a scalar multiple of a certain incidence vector, and $supp(c)$ intersects every $(n-k)$-dimensional space in $1\pmod{p}$ points.
\end{result}
\begin{lemma} \label{R5} Let $c\in \mathrm{C}_k(n,q)$, $q=p^h$, $p$ prime, $h\geq 1$, then there exists a constant $a\in \mathbb{F}_p$ such that $(c,U)=a$, for all subspaces $U$ of dimension at least $n-k$.\end{lemma}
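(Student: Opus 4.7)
The plan is to compute $(K,U)$ for a single $k$-space $K$ and a subspace $U$ with $\dim U \geq n-k$, and to show this pairing is always equal to $1$ in $\mathbb{F}_p$, regardless of $U$. Once this is established, the lemma follows immediately by linearity: writing an arbitrary $c \in \mathrm{C}_k(n,q)$ as $c = \sum_i \lambda_i K_i$ with $\lambda_i \in \mathbb{F}_p$ and each $K_i$ a $k$-space, we get $(c,U) = \sum_i \lambda_i (K_i,U) = \sum_i \lambda_i$, an element of $\mathbb{F}_p$ that depends only on the expression of $c$, not on $U$.

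To verify that $(K,U)=1$ in $\mathbb{F}_p$, I first use the projective dimension formula: for a $k$-space $K$ and any subspace $U$ with $\dim U = m \geq n-k$,
\[
\dim(K\cap U) \;=\; \dim K + \dim U - \dim(K+U) \;\geq\; k + m - n \;\geq\; 0,
\]
so the intersection $K\cap U$ is a (nonempty) projective subspace of $\PG(n,q)$ of some dimension $d\geq 0$. Therefore $|K\cap U| = \theta_d = 1 + q + q^2 + \cdots + q^d$. Since $q = p^h$ with $h\geq 1$, every term with $q$ in it vanishes modulo $p$, giving $\theta_d \equiv 1 \pmod p$. Hence $(K,U) = |K\cap U| \bmod p = 1$ in $\mathbb{F}_p$ for every $k$-space $K$ and every subspace $U$ of dimension at least $n-k$.

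Combining these two observations, for any $c = \sum_i \lambda_i K_i \in \mathrm{C}_k(n,q)$ and any subspace $U$ with $\dim U \geq n-k$, one obtains $(c,U) = \sum_i \lambda_i \cdot 1 = \sum_i \lambda_i =: a \in \mathbb{F}_p$, a constant independent of $U$. There is essentially no obstacle here: the only thing to watch is that the expression of $c$ as a combination of $k$-spaces is not unique, but the computation shows that $\sum_i \lambda_i$ nevertheless takes the same value for every such expression, since $(c,U)$ is an invariant of $c$ for any single choice of $U$ of dimension $n-k$.
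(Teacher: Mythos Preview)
Your proof is correct. Note that the paper does not actually give its own proof of this lemma: it is quoted as a result from \cite{LSV2}, so there is no in-paper argument to compare against. Your argument---computing $(K,U)\equiv |K\cap U|\equiv \theta_d\equiv 1\pmod p$ via the projective dimension formula and then extending by $\mathbb{F}_p$-linearity to all of $\mathrm{C}_k(n,q)$---is the standard and natural one, and it is essentially the proof given in \cite{LSV2} as well.
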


In the same way as is done by the authors in \cite[Theorem 19]{LSV2}, one can prove Lemma \ref{lem0}, which shows that all minimal $k$-blocking sets of size less than $2q^k$ and intersecting every $(n-k)$-space in $1 \pmod{p}$ points, are small.
\begin{lemma}\label{lem0} Let $B$ be a minimal  $k$-blocking set in
$\PG(n,q)$, $n\geq 2$,  $q=p^h$, $p$ prime, $p>5$, $h\geq 1$, 
 intersecting every $(n-k)$-dimensional space in $1 \pmod{p}$ points. If $|B|\in ]\theta_k,2q^k[$, then
\[ |B|< 
\frac{3(q^k-q^k/p)}{2}.\]
\end{lemma}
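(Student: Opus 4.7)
The plan is to extract from the $1\pmod p$ intersection condition a single quadratic inequality in $|B|$ by a triple counting argument, and then show that the forbidden interval between the two roots of that quadratic strictly contains $\bigl[\tfrac{3}{2}(q^k-q^k/p),\,2q^k\bigr)$, which forces $|B|$ outside this range.

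First I would introduce $N$, $N_P$, $M$ for the number of $(n-k)$-spaces in $\PG(n,q)$, through a fixed point, and through a fixed line respectively, recording the Gaussian-binomial ratios $N_P/M=(q^n-1)/(q^{n-k}-1)$ and $N/N_P=\theta_n/\theta_{n-k}$. Since every $(n-k)$-space meets $B$ in $1+jp$ points for some integer $j\geq 0$, let $U_j$ be the number of such $(n-k)$-spaces. The three standard incidence identities
\[
\sum_j U_j = N,\qquad \sum_j (1+jp)\,U_j = N_P\,|B|,\qquad \sum_j \binom{1+jp}{2}\,U_j = M\binom{|B|}{2}
\]
determine $\sum_j jU_j$ and $\sum_j j^2U_j$ as explicit expressions in $|B|$, $N$, $N_P$, $M$ and $p$.

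Next I would apply the termwise-obvious inequality $\sum_j j(j-1)U_j\geq 0$, which after substitution reduces to the single quadratic
\[
M\,|B|^2 - \bigl(M+(p+1)N_P\bigr)|B| + (p+1)N \;\geq\; 0,
\]
so $|B|\leq |B|_1$ or $|B|\geq |B|_2$ for the two real roots $|B|_1\leq|B|_2$. It then remains to verify, for $p>5$, both $|B|_1<\tfrac{3}{2}(q^k-q^k/p)$ and $|B|_2>2q^k$: each reduces to evaluating the quadratic at the appropriate test value and simplifying using the explicit formulas for $N_P/M$ and $N/N_P$. The hypothesis $|B|<2q^k$ then rules out the branch $|B|\geq |B|_2$, and so $|B|\leq|B|_1<\tfrac{3}{2}(q^k-q^k/p)$.

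The main technical obstacle is the verification of the bound on the lower root. Asymptotically the roots behave like $\tfrac{1}{2}\bigl((p+1)\pm\sqrt{(p+1)(p-3)}\bigr)q^k$, and the leading-order form of $|B|_1<\tfrac{3}{2}(q^k-q^k/p)$ becomes, after squaring, a cubic inequality in $p$ that is violated at $p=5$ and first satisfied (among primes) at $p=7$, explaining the hypothesis $p>5$. Carrying the lower-order corrections in $q$ through the estimate — so that the asymptotic threshold is preserved for all $q=p^h$ — is the delicate step, and is precisely the argument already carried out in the proof of \cite[Theorem 19]{LSV2} on which the present proof is modelled.
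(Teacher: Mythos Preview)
Your proposal is correct and matches the paper's approach: the paper gives no self-contained proof but simply states that the argument is the same as \cite[Theorem~19]{LSV2}, and the triple-counting scheme with the $1\pmod p$ intersection sizes, the resulting quadratic $M\,|B|^2-(M+(p+1)N_P)|B|+(p+1)N\geq 0$, and the root analysis you outline is exactly that method. Your asymptotic check that the cubic $2p^3-13p^2+12p-9$ changes sign between $p=5$ and $p=7$ correctly identifies why the hypothesis $p>5$ is needed.
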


\begin{lemma}\label{lem1} Let $B_1$ and $B_2$ be small minimal $(n-k)$-blocking sets in $\PG(n,q)$, $q=p^h$, $p$ prime, $h\geq 1$. Then $B_1-B_2\in \C(n,q)^\bot$.
\end{lemma}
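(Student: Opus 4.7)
The plan is to verify the orthogonality directly from the characterization given in the Definitions section: a vector lies in $\C(n,q)^\bot$ if and only if it is orthogonal to the incidence vector of every $k$-space of $\PG(n,q)$. So I would fix an arbitrary $k$-dimensional subspace $K$ and try to show $(B_1-B_2,K)=0$ in $\mathbb{F}_p$, i.e., $|K\cap B_1|\equiv |K\cap B_2|\pmod{p}$.

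The key observation is that each $B_i$ is a small minimal $(n-k)$-blocking set, and by definition an $(n-k)$-blocking set meets every $k$-dimensional subspace; in particular $K\cap B_i\neq\emptyset$ for $i=1,2$. Now I would invoke Result \ref{sz} of Sz\H{o}nyi--Weiner: since $|B_i|<3(q^{n-k}+1)/2$ and $B_i$ is a minimal blocking set with respect to $k$-spaces, every subspace that meets $B_i$ does so in $1\pmod{p}$ points. Applying this with the subspace $K$ gives $|K\cap B_1|\equiv 1\equiv |K\cap B_2|\pmod{p}$, hence $(B_1,K)=(B_2,K)$ in $\mathbb{F}_p$. Since $K$ was an arbitrary $k$-space, this yields $B_1-B_2\in\C(n,q)^\bot$.

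There is essentially no obstacle here once Result \ref{sz} is in hand; the entire argument is a one-line consequence. The only mild subtlety is that Result \ref{sz} is stated for $p>2$, while the lemma is phrased for arbitrary prime $p$; this looks like an implicit assumption that is in force throughout this part of the paper (in particular, Lemma \ref{lem0} already requires $p>5$), and I would simply carry over the same hypothesis if one wanted to be fully rigorous.
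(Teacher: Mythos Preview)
Your proposal is correct and follows exactly the same approach as the paper: both use Result~\ref{sz} to conclude $(B_i,\pi_k)\equiv 1\pmod p$ for every $k$-space $\pi_k$, whence $(B_1-B_2,\pi_k)=0$ and $B_1-B_2\in\C(n,q)^\bot$. Your remark about the implicit $p>2$ hypothesis inherited from Result~\ref{sz} is accurate and worth keeping in mind.
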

\begin{proof}
It follows from Result \ref{sz} that $(B_i,\pi_k)=1$ for all $k$-spaces $\pi_k$, $i=1,2$. Hence $(B_1-B_2,\pi_k)=0$ for all $k$-spaces $\pi_k$. This implies that $B_1-B_2\in \C(n,q)^\bot$.\end{proof}

\begin{lemma} \label{lem2}  Let $c$ be a codeword of $\C(n,q)$, $q=p^h$, $p$ prime, $h\geq 1$, with weight smaller than $2q^k$, for which $(c,S)\neq 0$ for some $(n-k)$-space $S$, and let $B$ be a small minimal $(n-k)$-blocking set. Then $supp(c)$ intersects $B$ in $1 \pmod{p}$ points.
\end{lemma}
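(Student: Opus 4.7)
The plan is to reduce the problem to comparing the intersection $|supp(c)\cap B|$ with the known intersection $|supp(c)\cap \pi|$ for an arbitrary $(n-k)$-space $\pi$, using Lemma \ref{lem1} to replace $B$ by $\pi$ inside the inner product with $c$.

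First I would apply Result \ref{st}: since $wt(c)<2q^k$ and $(c,S)\neq 0$ for some $(n-k)$-space $S$, its support $K:=supp(c)$ is a minimal $k$-blocking set, $c$ equals $\lambda K$ for some nonzero $\lambda\in \mathbb{F}_p$, and $|K\cap \pi|\equiv 1\pmod{p}$ for every $(n-k)$-space $\pi$. Now fix any $(n-k)$-space $\pi$. One checks from the formula $\theta_{n-k}=(q^{n-k+1}-1)/(q-1)<3(q^{n-k}+1)/2$ that $\pi$ itself is a small minimal $(n-k)$-blocking set, so Lemma \ref{lem1} applied to $B$ and $\pi$ gives $B-\pi\in \C(n,q)^{\bot}$, whence $(c,B)=(c,\pi)$. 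Reducing both sides modulo $p$,
\[(c,B)=\lambda\cdot |K\cap B|\quad\text{and}\quad (c,\pi)=\lambda\cdot |K\cap \pi|\equiv \lambda\pmod{p},\]
so $\lambda\cdot |K\cap B|\equiv \lambda\pmod{p}$. Since $\lambda\neq 0$ in $\mathbb{F}_p$, we conclude $|K\cap B|\equiv 1\pmod{p}$, which is exactly the claim.

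I do not expect any serious obstacle: the entire argument is a short assembly of Result \ref{st} (structure of $c$) together with Lemma \ref{lem1} (the identity $(c,B)=(c,\pi)$), followed by one line of arithmetic modulo $p$. The only small verification needed is that an arbitrary $(n-k)$-space qualifies as a small minimal $(n-k)$-blocking set in the sense required by Lemma \ref{lem1}, which is immediate from the size bound above.
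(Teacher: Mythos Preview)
Your proof is correct and follows essentially the same approach as the paper: both use Result~\ref{st} to write $c=\lambda\cdot supp(c)$ and then invoke Lemma~\ref{lem1} to conclude that $(c,B)=(c,\pi)$ for any $(n-k)$-space $\pi$, reducing the claim to the already known fact that $|supp(c)\cap\pi|\equiv 1\pmod p$. The paper phrases the middle step slightly more abstractly (``$(c,B)$ is constant over all small minimal $(n-k)$-blocking sets''), but the content is identical.
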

\begin{proof}
Let $c$ be a codeword of $\C(n,q)$ with weight smaller than $2q^k$, for which $(c,S)\neq 0$ for some $(n-k)$-space $S$. Lemma \ref{lem1} shows that $(c,B_1-B_2)=0=(c,B_1)-(c,B_2)$ for all small minimal $(n-k)$-blocking sets $B_1$ and $B_2$. Hence $(c,B)$, with $B$ a small minimal $(n-k)$-blocking set, is a constant. Result \ref{st} shows that $c$ is a codeword only taking values from $\lbrace 0,a \rbrace$, so $(c,B)=a(supp(c),B)$, hence $(supp(c),B)$ is a constant too. Let $B_1$ be an $(n-k)$-space, then Result \ref{st} shows that $(supp(c),B_1)=1$. Since $B_1$ is a small minimal $(n-k)$-blocking set, the number of intersection points of $supp(c)$ and $B$ is equal to 1 $\pmod{p}$ for any small minimal blocking set $B$.
\end{proof}
It follows from Lemma \ref{R5} that, for $c\in \C(n,q)$ and $S$ an $(n-k)$-space, $(c,S)$ is a constant. Hence, either $(c,S)\neq 0$ for all $(n-k)$-spaces $S$, or $(c,S)=0$ for all $(n-k)$-spaces $S$. In this latter case, $c\in\mathrm{C}_{n-k}(n,q)^{\bot}$.

\begin{theorem} \label{hoofd}There are no codewords in $\C(n,q)\setminus\mathrm{C}_{n-k}(n,q)^{\bot}$, $1\leq k\leq n-1$, $2\leq n$, with weight in the open interval $]\theta_k,2q^{k}[$, $q=p^h$, $p$ prime, $p>5$, $h\geq 1$.
\end{theorem}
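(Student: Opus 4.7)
I would argue by contradiction. Suppose $c \in \C(n,q) \setminus \mathrm{C}_{n-k}(n,q)^\bot$ has $wt(c) \in\, ]\theta_k, 2q^k[$, and set $B := supp(c)$.

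The first phase is to assemble the structural information on $B$ provided by the results already in place. Since $c \notin \mathrm{C}_{n-k}(n,q)^\bot$, there exists an $(n-k)$-space $S$ with $(c,S) \neq 0$; Result \ref{st} then identifies $B$ as a minimal $k$-blocking set with $c = \alpha\, B$ for some $\alpha \in \mathbb{F}_p^*$ and with $|B \cap T| \equiv 1 \pmod{p}$ for every $(n-k)$-space $T$. Lemma \ref{lem0} sharpens the size bound to $|B| < \tfrac{3}{2}(q^k - q^k/p) < \tfrac{3}{2}(q^k + 1)$, which lies strictly below the Sz\H{o}nyi--Weiner threshold, so Result \ref{sz} (applied with blocking parameter $n-k$) yields that every subspace of $\PG(n,q)$ meets $B$ in $0$ or $1 \pmod{p}$ points. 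Lemma \ref{lem2} then gives the crucial additional constraint $|B \cap B'| \equiv 1 \pmod{p}$ for every small minimal $(n-k)$-blocking set $B'$.

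The second phase, and the core of the argument, is to derive a contradiction from these congruences together with $|B| \in\, ]\theta_k, 2q^k[$. My plan is to exhibit a small minimal $(n-k)$-blocking set $B'$ for which $|B \cap B'| \pmod{p}$ is explicitly computable and fails to be $\equiv 1 \pmod{p}$. A natural source of such $B'$ is a pencil of $(n-k)$-spaces through a common $(n-k-1)$-space $T_0$ inside a fixed $(n-k+1)$-space $\Sigma$: take the union of two (or a few) pencil elements and, if necessary, discard their common intersection and add back a controlled subset to obtain a candidate small minimal $(n-k)$-blocking set. Because $B$ meets every subspace in $1 \pmod{p}$ points, inclusion--exclusion over the pencil computes $|B \cap B'| \pmod{p}$ in terms of the number of pencil elements chosen and the intersection $|B \cap T_0|$; for a suitable choice of $B'$ the residue is forced to differ from $1 \pmod{p}$, contradicting Lemma \ref{lem2}.

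I expect the main obstacle to be carrying out this construction: one must verify that $B'$ is indeed a blocking set, is minimal, lies below the Sz\H{o}nyi--Weiner size threshold (so that Lemma \ref{lem2} applies), and yields a residue incompatible with $\equiv 1 \pmod{p}$. The strength of this plan rests precisely on applying Lemma \ref{lem2} to small minimal $(n-k)$-blocking sets beyond the trivial family of $(n-k)$-subspaces; that is the new leverage distinguishing Theorem \ref{hoofd} from Results \ref{Re1} and \ref{Re}, which could only rule out scalar multiples of ``linear'' small minimal $k$-blocking sets and had to leave the non-linear candidates untouched.
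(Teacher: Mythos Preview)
Your first phase is correct and matches the paper exactly. The gap is in the second phase.

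The construction you sketch cannot work as written: a union of two or more $(n-k)$-spaces through a common $(n-k-1)$-space $T_0$ is never a \emph{minimal} $(n-k)$-blocking set, since any single $(n-k)$-space in the union is already a blocking set. Lemma~\ref{lem2} therefore does not apply to such a $B'$. Your suggested fix (``discard their common intersection and add back a controlled subset'') is not a construction; once you delete points of one of the $(n-k)$-spaces you lose the blocking property, and there is no evident way to repair it while preserving both minimality and the clean inclusion--exclusion count modulo $p$ that your argument needs. Indeed, the computation you have in mind already shows the difficulty: with $m$ pencil members one gets $|B\cap B'|\equiv m-(m-1)|B\cap T_0|\pmod p$, which equals $1$ whenever $|B\cap T_0|\equiv 1$, so the ``contradiction'' only arises when $T_0$ is skew to $B$ --- and in that case you would need a genuinely new minimal blocking set, not a union of subspaces.

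The paper's route is different and is the missing idea. It applies Lemma~\ref{lem2} not to ad hoc unions but to the family of \emph{linear} $(n-k)$-blocking sets, i.e.\ the sets $\mathcal{B}(\pi)$ for \emph{arbitrary} $(h(n-k))$-dimensional subspaces $\pi$ of $\PG(h(n+1)-1,p)$ under field reduction. The $1\pmod p$ intersection condition then transfers to the point set $\tilde B\subset\PG(h(n+1)-1,p)$ covered by the spread elements corresponding to $B$: one shows $\tilde B$ is a small minimal $(h(k+1)-1)$-blocking set over the \emph{prime} field. Over $\mathbb{F}_p$ the Sz\H onyi--Weiner $1\pmod p$ condition forces every line to meet $\tilde B$ in $0$, $1$, or $p+1$ points, so $\tilde B$ is a subspace; pulling back, $B$ is a $k$-space and $wt(c)=\theta_k$, contradicting $wt(c)>\theta_k$. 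The point is that linear blocking sets supply a family rich enough to test $\tilde B$ against every subspace of the right dimension over $\mathbb{F}_p$, whereas a handful of pencil-type examples do not.
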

\begin{proof} Let $Y$ be a linear small minimal $(n-k)$-blocking set in $\PG(n,q)$. As explained in Section \ref{Blo}, $Y$ corresponds to a set $\bar{Y}=\mathcal{B}(\pi)$ of $(h-1)$-dimensional spread elements intersecting a certain $(h(n-k))$-space $\pi$ in $\PG(h(n+1)-1,p)$. Let $c$ be a codeword of $\C(n,q)\setminus\mathrm{C}_{n-k}(n,q)^{\bot}$ with weight at most $2q^k-1$. Result \ref{st} and Lemma \ref{lem0} show that $supp(c)$ is a small minimal $k$-blocking set $B$. This blocking set $B$ corresponds to a set $\bar{B}$ of $\vert B \vert$ spread elements in $\PG(h(n+1)-1,p)$. Since $supp(c)$ and $Y$ intersect in $1 \pmod{p}$ points (see Lemma \ref{lem2}), $\bar{B}$ and $\bar{Y}$ intersect in $1\pmod{p}$ spread elements. Since all spread elements of $\bar{Y}$ intersect $\pi$, there are $1\pmod{p}$ spread elements of $\bar{B}$ that intersect $\pi$.

But this holds for any $(h(n-k))$-space $\pi'$ in $\PG(h(n+1)-1,p)$, since any $(h(n-k))$-space 
$\pi'$ corresponds to a linear small minimal $(n-k)$-blocking set $Y'$ in $\PG(n,q)$.

Let $\tilde{B}$ be the set of points contained in the spread elements of the set $\bar{B}$. Since a spread element that intersects a subspace of $\PG(h(n+1)-1,p)$ intersects it in $1 \pmod{p}$ points, $\tilde{B}$ intersects any $(h(n-k))$-space in $1 \pmod{p}$ points. Moreover, $\vert \tilde{B}\vert=\vert B \vert\cdot(p^h-1)/(p-1) \leq 3(p^{hk}-p^{hk-1})\cdot(p^h-1)/(2(p-1))< 3(p^{h(k+1)-1}+1)/2$ (see Lemma \ref{lem0}). This implies that $\tilde{B}$ is a small $(h(k+1)-1)$-blocking set in $\PG(h(n+1)-1,p)$.

 Moreover, $\tilde{B}$ is minimal. This can be proved in the following way. Let $R$ be a point of $\tilde{B}$. Since $B$ is a minimal $k$-blocking set in $\PG(n,q)$, there is a tangent $(n-k)$-space $S$ through the point $R'$ of $\PG(n,q)$ corresponding to the spread element $\mathcal{B}(R)$. Now $S$ corresponds to an $(h(n-k+1)-1)$-space $\pi'$ in $\PG(h(n+1)-1,p)$, such that $\mathcal{B}(R)$ is the only element of $\bar{B}$ in $\pi'$. This implies that through $R$, there is an $(h(n-k))$-space in $\pi'$ containing only the point $R$ of $\tilde{B}$. This shows that through every point of $\tilde{B}$, there is a tangent $(h(n-k))$-space, hence that $\tilde{B}$ is a minimal $(h(k+1)-1)$-blocking set.

Result \ref{sz} implies that $\tilde{B}$ intersects any subspace of $\PG(h(n+1)-1,p)$ in $1 \pmod{p}$ or zero points. This implies that a line is skew, tangent  or entirely contained in $\tilde{B}$, hence $\tilde{B}$ is a subspace of $\PG(h(n+1)-1,p)$, with at most $3(p^{h(k+1)-1}+1)/2$ points, intersecting every $(h(n-k))$-space. Moreover, it is the point set of a set of $\vert B \vert$ spread elements. Hence, $\bar{B}$ is the set of spread elements corresponding to a $k$-space in $\PG(n,q)$, so $supp(c)$ has size $\theta_k$.
\end{proof}

In \cite{LSV2}, Lavrauw et al. determined a lower bound on the weight of the code $\C(n,q)^{\bot}$.
\begin{result} \label{th8}The minimum weight of $\C(n,q)^\bot$, $q=p^h$, $p$ prime, $h\geq 1$, $2\leq n$, $1\leq k\leq n-1$, is at least $(12\theta_{n-k}+2)/7$ if $p=7$, and at least $(12\theta_{n-k}+6)/7$ if $p>7$.
\end{result}

\begin{theorem} \label{hoofd'}
For $q=p^h$, $p$ prime, $h\geq 1$, $2\leq n$, $1\leq k \leq n-1$, there are no codewords in $\C(n,q)$ with weight in the open interval $]\theta_k,(12\theta_{k}+2)/7[$ if $p=7$ and there are no codewords in $\C(n,q)$ with weight in the open interval $]\theta_k,(12\theta_{k}+6)/7[$ if $p>7$.
\end{theorem}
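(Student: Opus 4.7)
The plan is to split on whether the codeword $c$ lies in $\mathrm{C}_{n-k}(n,q)^{\bot}$ or not, and then to combine Theorem \ref{hoofd} with Result \ref{th8}, which are explicitly designed to complement each other along this dichotomy.

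First I would verify that the two new open intervals are contained in $]\theta_k, 2q^k[$, so that Theorem \ref{hoofd} can take over outside $\mathrm{C}_{n-k}(n,q)^{\bot}$. This amounts to checking $14q^k \geq 12\theta_k + 2$ when $p=7$ and $14q^k \geq 12\theta_k + 6$ when $p>7$. A direct computation gives
\[
14q^k - 12\theta_k = \frac{2q^k(q-7) + 12}{q-1},
\]
which equals exactly $2$ when $q=7$, is strictly larger than $2$ for $q=7^h$ with $h\geq 2$, and is at least $10$ whenever $q=p^h$ with $p \geq 11$. Hence the desired inclusion of intervals holds in every case.

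Given the inclusion, take any codeword $c \in \C(n,q)$ whose weight lies in the asserted open interval. If $c \notin \mathrm{C}_{n-k}(n,q)^{\bot}$, then Theorem \ref{hoofd} rules out the existence of $c$, since its weight already lies in the larger forbidden interval $]\theta_k, 2q^k[$. Otherwise $c \in \mathrm{C}_{n-k}(n,q)^{\bot}$, and I would apply Result \ref{th8} with the roles of $k$ and $n-k$ exchanged: this gives $wt(c) \geq (12\theta_k+2)/7$ when $p=7$ and $wt(c) \geq (12\theta_k+6)/7$ when $p>7$, contradicting the assumed upper bound on $wt(c)$.

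I do not anticipate any substantive obstacle, as the entire argument is a bookkeeping combination of two results already on the table; the only arithmetic step is the interval comparison above. One point worth flagging is that for $p=7$, $h=1$, the upper endpoint $(12\theta_k+2)/7$ coincides \emph{exactly} with $2q^k$, so both inputs are used at full strength at that boundary.
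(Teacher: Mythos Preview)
Your proposal is correct and follows exactly the approach the paper intends: the paper's own proof is the single line ``This follows immediately from Theorem~\ref{hoofd} and Result~\ref{th8},'' and you have spelled out the implicit dichotomy on membership in $\mathrm{C}_{n-k}(n,q)^{\bot}$ together with the interval inclusion $(12\theta_k+\ast)/7 \le 2q^k$ that makes Theorem~\ref{hoofd} applicable. Your use of Result~\ref{th8} with $k$ replaced by $n-k$ is the right reading, and the boundary observation at $p=7$, $h=1$ is accurate.
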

\begin{proof}
This follows immediately from Theorem \ref{hoofd} and Result \ref{th8}.
\end{proof}
In \cite{LSV2}, the authors proved the following result.

\begin{result} \cite[Lemma 3]{LSV2}\label{lem5} Assume that $k\geq n/2$. A codeword $c$ of $\C(n,q)$ is in $\C(n,q) \cap \C(n,q)^\bot$ if and only if $(c,U)=0$ for all subspaces $U$ with $\dim(U)\geq n-k$.
\end{result}
\begin{corollary}\label{gev3} If $k\geq n/2$, $\C(n,q)\setminus\mathrm{C}_{n-k}(n,q)^{\bot}=\C(n,q)\setminus\C(n,q)^{\bot}$, $q=p^h$, $p$ prime, $h\geq 1$.\end{corollary}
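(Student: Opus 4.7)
The plan is to show the equivalent set equality $\C(n,q)\cap \mathrm{C}_{n-k}(n,q)^{\bot}=\C(n,q)\cap\C(n,q)^{\bot}$ under the hypothesis $k\geq n/2$, by chaining the two previously established facts about codewords of $\C(n,q)=\mathrm{C}_k(n,q)$. The target condition on the right-hand side is characterized by Result \ref{lem5} as $(c,U)=0$ for all subspaces $U$ with $\dim U\geq n-k$; the target condition on the left-hand side is, by definition of the dual of $\mathrm{C}_{n-k}(n,q)$, just $(c,S)=0$ for every $(n-k)$-space $S$. So it suffices, for $c\in\C(n,q)$, to promote the vanishing against $(n-k)$-spaces to vanishing against every subspace of dimension at least $n-k$.

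First I would observe the easy direction: if $(c,U)=0$ for every $U$ of dimension $\geq n-k$, then in particular $(c,S)=0$ for every $(n-k)$-space $S$, so $\C(n,q)\cap\C(n,q)^{\bot}\subseteq \C(n,q)\cap\mathrm{C}_{n-k}(n,q)^{\bot}$. For the reverse inclusion, I would invoke Lemma \ref{R5}: for any $c\in\C(n,q)$ there is a single constant $a\in\mathbb{F}_p$ with $(c,U)=a$ for every subspace $U$ of dimension at least $n-k$. Thus a single vanishing $(c,S)=0$ on one $(n-k)$-space $S$ forces $a=0$ and hence $(c,U)=0$ on the whole family. Combined with Result \ref{lem5}, this places $c$ in $\C(n,q)\cap\C(n,q)^{\bot}$, giving the inclusion $\C(n,q)\cap\mathrm{C}_{n-k}(n,q)^{\bot}\subseteq \C(n,q)\cap\C(n,q)^{\bot}$.

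The two inclusions yield the equality of intersections, which is exactly the required equality of complements $\C(n,q)\setminus\mathrm{C}_{n-k}(n,q)^{\bot}=\C(n,q)\setminus\C(n,q)^{\bot}$. There is no real obstacle here; the only thing to be careful about is that Result \ref{lem5} genuinely requires the hypothesis $k\geq n/2$ (it is the characterization of $\C(n,q)\cap\C(n,q)^{\bot}$ that uses this range), whereas Lemma \ref{R5} is available for all $k$. The hypothesis $k\geq n/2$ is therefore imported from Result \ref{lem5} and is essential on the right-hand side of the claimed equality.
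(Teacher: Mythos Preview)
Your proof is correct and follows essentially the same approach as the paper's own proof. The paper compresses the argument to a single sentence invoking Result~\ref{lem5}, tacitly relying on the constancy established by Lemma~\ref{R5} (already spelled out in the paragraph preceding Theorem~\ref{hoofd}); you have simply made that reliance explicit, which is entirely appropriate.
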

\begin{proof} It follows from Result \ref{lem5} that $\C(n,q)\cap \mathrm{C}_{n-k}(n,q)^{\bot}=\C(n,q) \cap \C(n,q)^{\bot}$ if $k\geq n/2$.
\end{proof}

In \cite{LSV1}, the authors proved the following result.
\begin{result} \cite[Theorem 5]{LSV1}\label{R}
The minimum weight of $\mathrm{C}_{n-1}(n,q)\cap \mathrm{C}_{n-1}(n,q)^\bot$ is equal to $2q^{n-1}$.
\end{result}
\begin{result}
\cite[Theorem 12]{LSV2}\label{R4}
The minimum weight of $\mathrm{C}_k(n,p)^{\bot}$, where $p$ is a prime, is equal to $2p^{n-k}$, and
the codewords of weight $2p^{n-k}$ are the scalar multiples of the difference of two $(n-k)$-spaces
intersecting in an $(n-k-1)$-space.
\end{result}

Theorem \ref{hoofd}, Corollary \ref{gev3}, and Result \ref{R} yield the following corollary, which gives a sharp empty interval on the size of small weight codewords of $\mathrm{C}_{n-1}(n,q)$, since $\theta_{n-1}$ is the weight of a codeword arising from the incidence vector of an $(n-1)$-space and $2q^{n-1}$ is the weight of a codeword arising from the difference of the incidence vectors of two $(n-1)$-spaces.

\begin{corollary} There are no codewords with weight in the open interval $]\theta_{n-1},2q^{n-1}[$ in the code $\mathrm{C}_{n-1}(n,q)$, $q=p^h$, $p$ prime, $h\geq 1$, $p>5$.
\end{corollary}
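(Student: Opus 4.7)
The corollary follows by splitting an arbitrary codeword $c \in \mathrm{C}_{n-1}(n,q)$ into two cases according to whether $c$ lies in $\mathrm{C}_{n-1}(n,q)^{\bot}$ or not, and handling the two cases separately using the machinery already assembled in the paper.

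The plan is as follows. First, I would observe that for $k = n-1$ and any $n \geq 2$, the inequality $k \geq n/2$ is satisfied. Hence Corollary \ref{gev3}, applied with this value of $k$, gives the identification
\[
\mathrm{C}_{n-1}(n,q)\setminus \mathrm{C}_1(n,q)^{\bot} \;=\; \mathrm{C}_{n-1}(n,q)\setminus \mathrm{C}_{n-1}(n,q)^{\bot}.
\]
Next, I would invoke Theorem \ref{hoofd} with $k = n-1$, which (since $p > 5$) rules out any codeword of $\mathrm{C}_{n-1}(n,q)\setminus \mathrm{C}_1(n,q)^{\bot}$ whose weight lies in the open interval $\bigl]\theta_{n-1},\,2q^{n-1}\bigr[$. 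Combined with the previous display, this disposes of all codewords of $\mathrm{C}_{n-1}(n,q)$ in that weight range that do \emph{not} belong to $\mathrm{C}_{n-1}(n,q)^{\bot}$.

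It remains to handle the complementary case $c \in \mathrm{C}_{n-1}(n,q) \cap \mathrm{C}_{n-1}(n,q)^{\bot}$. Here I would simply appeal to Result \ref{R}, which states that the minimum weight of $\mathrm{C}_{n-1}(n,q) \cap \mathrm{C}_{n-1}(n,q)^{\bot}$ equals $2q^{n-1}$. Consequently no nonzero codeword of this intersection has weight strictly less than $2q^{n-1}$, and in particular none has weight in $\bigl]\theta_{n-1},\,2q^{n-1}\bigr[$.

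Putting the two cases together, every codeword of $\mathrm{C}_{n-1}(n,q)$ whose weight lies in $\bigl]\theta_{n-1},\,2q^{n-1}\bigr[$ is forbidden, which is exactly the statement of the corollary. There is essentially no serious obstacle: all the hard work—the empty interval for codewords outside the relevant dual (Theorem \ref{hoofd}), the identification of the two ``exceptional'' subcodes (Corollary \ref{gev3}), and the minimum weight of the self-orthogonal part (Result \ref{R})—has already been done; the only point requiring a moment of care is to verify the hypothesis $k \geq n/2$ needed for Corollary \ref{gev3}, which is automatic when $k = n-1$ and $n \geq 2$.
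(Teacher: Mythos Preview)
Your proof is correct and follows exactly the approach indicated in the paper: the paper simply states that the corollary follows from Theorem~\ref{hoofd}, Corollary~\ref{gev3}, and Result~\ref{R}, and your argument spells out precisely this combination---splitting into the cases $c\notin\mathrm{C}_{n-1}(n,q)^{\bot}$ (handled by Theorem~\ref{hoofd} together with Corollary~\ref{gev3}) and $c\in\mathrm{C}_{n-1}(n,q)\cap\mathrm{C}_{n-1}(n,q)^{\bot}$ (handled by Result~\ref{R}). Your verification that $k=n-1\geq n/2$ for $n\geq 2$ is the one small check needed, and you have it.
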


In the planar case, this yields the following corollary, which improves on the result of Chouinard mentioned in Result \ref{ch}.

\begin{corollary} There are no codewords with weight in the open interval $]q+1,2q[$ in the $p$-ary linear code of points and lines of $\PG(2,q)$, $q=p^h$, $p$ prime, $h\geq 1$, $p>5$.
\end{corollary}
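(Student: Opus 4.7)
The plan is to obtain this statement as the planar specialization ($n=2$) of the previous corollary. In $\PG(2,q)$ the $(n-1)$-spaces are precisely the lines, so with $n=2$ the code $\mathrm{C}_{n-1}(n,q)$ is literally the $p$-ary linear code of points and lines of $\PG(2,q)$ appearing in the statement. The parameters substitute as $\theta_{n-1}=\theta_1=q+1$ and $2q^{n-1}=2q$, so the forbidden open interval $]\theta_{n-1},2q^{n-1}[$ becomes $]q+1,2q[$, matching the claim exactly. The hypothesis $p>5$ carries over verbatim, and the requirement $n\geq 2$ in the previous corollary is satisfied with equality.

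There is no real obstacle: the substantive work has already been done upstream, in Theorem \ref{hoofd} (whose field-reduction argument here takes place in $\PG(3h-1,p)$), in Corollary \ref{gev3} (which needs $k\geq n/2$, satisfied by $k=n-1=1$, $n=2$), and in Result \ref{R} (which secures the lower bound $2q^{n-1}=2q$ on the minimum weight of $\mathrm{C}_1(2,q)\cap \mathrm{C}_1(2,q)^\bot$). So the proof is a single line invoking the previous corollary with $n=2$. The only thing to highlight is that this strictly improves Result \ref{ch}: Chouinard's gap $[p+2,2p-1]$ was proved only in $\PG(2,p)$ with $q=p$ prime, whereas here the same sharp gap $]q+1,2q[$ is established for every prime power $q=p^h$ with $p>5$.
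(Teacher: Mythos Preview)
Your proposal is correct and follows the paper's own approach exactly: the corollary is obtained by specializing the preceding corollary on $\mathrm{C}_{n-1}(n,q)$ to $n=2$, giving $\theta_1=q+1$ and $2q^{n-1}=2q$. The paper likewise presents it as the planar case of that result, noting the improvement over Chouinard.
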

In this case, the weight $q+1$ corresponds to the incidence vector of a line, and the weight $2q$ can be obtained by taking the difference of the incidence vectors of two different lines.

Theorem \ref{hoofd} and Result \ref{R4} yield the following corollary, extending the result of Chouinard mentioned in Result \ref{ch} to general dimension.

\begin{corollary} \label{lgev}There are no codewords with weight in the open interval $]\theta_{k},2q^{k}[$ in the code $\mathrm{C}_{k}(n,q)$, $n\geq 2$, $1\leq k \leq n-1$, $q$ prime, $q>5$.
\end{corollary}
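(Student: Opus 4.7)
The plan is to combine Theorem \ref{hoofd} with the minimum weight bound of Result \ref{R4} in a direct dichotomy argument. Let $c$ be a nonzero codeword of $\mathrm{C}_k(n,q)$ with weight in the open interval $]\theta_k,2q^k[$, where $q$ is prime and $q>5$. The codeword $c$ either lies in $\mathrm{C}_k(n,q)\setminus\mathrm{C}_{n-k}(n,q)^\bot$ or in $\mathrm{C}_k(n,q)\cap\mathrm{C}_{n-k}(n,q)^\bot$; I would rule out both possibilities.

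First, if $c\in\mathrm{C}_k(n,q)\setminus\mathrm{C}_{n-k}(n,q)^\bot$, then Theorem \ref{hoofd} (applicable since $p=q>5$) immediately forbids $c$ from having weight in $]\theta_k,2q^k[$, contradicting the assumption. Hence $c$ must lie in $\mathrm{C}_{n-k}(n,q)^\bot$.

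Second, to deal with the remaining case, I would invoke Result \ref{R4} with the parameter $k$ replaced by $n-k$. Since $q$ is prime, Result \ref{R4} gives that the minimum weight of $\mathrm{C}_{n-k}(n,q)^\bot$ equals $2q^{n-(n-k)}=2q^k$. Therefore any nonzero element of $\mathrm{C}_{n-k}(n,q)^\bot$ has weight at least $2q^k$, which again contradicts the assumption that $\mathrm{wt}(c)<2q^k$. This exhausts both cases and establishes the corollary.

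There is no real obstacle here: the statement is a clean composition of the two earlier results, with the only care needed being the index swap $k\leftrightarrow n-k$ when invoking Result \ref{R4}, and the verification that the hypothesis $p>5$ of Theorem \ref{hoofd} is inherited from the prime $q>5$ in the corollary.
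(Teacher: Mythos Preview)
Your proposal is correct and matches the paper's own argument, which simply states that the corollary follows from Theorem~\ref{hoofd} and Result~\ref{R4}. The dichotomy you spell out, together with the index swap $k\leftrightarrow n-k$ when applying Result~\ref{R4}, is exactly the intended reasoning.
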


Address of the authors: \\
\vspace{5pt}
\\
Michel Lavrauw, Leo Storme, Geertrui Van de Voorde:\\
Department of pure mathematics and computer algebra,\\
Ghent University\\
Krijgslaan 281-S22\\
9000 Ghent  (Belgium)\\
$\lbrace$ml,ls,gvdvoorde$\rbrace$@cage.ugent.be\\
http://cage.ugent.be/ $\sim$ $\lbrace$ml,ls,gvdvoorde$\rbrace$\\
\\
Peter Sziklai:\\
     Department of Computer Science,\\
    E\"otv\"os Lor\'and University\\
    P\'azm\'any P. s\'et\'any 1/C\\
    H-1117 Budapest (Hungary)\\
   sziklai@cs.elte.hu \\
     http://www.cs.elte.hu/$\sim$sziklai\\

\end{document}